\documentclass[11pt,reqno]{amsart}
\usepackage[T1]{fontenc}
\usepackage{lmodern,color}
\usepackage{amsmath,amssymb,amsthm}
\usepackage{microtype}
\usepackage{enumitem}
\usepackage[hidelinks]{hyperref}
\usepackage[left=1.25in,right=1.25in,top=1.1in,bottom=1.1in]{geometry}

\newtheorem{theorem}{Theorem}[section]
\newtheorem{proposition}[theorem]{Proposition}
\newtheorem{lemma}[theorem]{Lemma}

\newtheorem{conjecture}[theorem]{Conjecture}

\numberwithin{equation}{section}

\newcommand{\R}{\mathbb R}
\newcommand{\Sn}{\mathbb S^{n-1}}

\newcommand{\cK}{\mathcal K}
\newcommand{\V}{\operatorname{V}}
\DeclareMathOperator{\supp}{supp}
\DeclareMathOperator{\conv}{conv}

\title[The B\'ezout inequality characterizes simplices]
{The B\'ezout inequality for mixed volumes characterizes simplices}
\date{}
\subjclass[2020]{52A39, 52A40}
\keywords{Mixed volume, B\'ezout inequality, simplex, relative inradius,
inner parallel body, covariogram}

\author[D. Langharst]{Dylan Langharst}
\address{Carnegie Mellon University\\ Department of Mathematical Sciences \\ Pittsburgh, PA 15213, USA\\ ORCID: 0000-0002-4767-3371}
\email{dlanghar@andrew.cmu.edu}

\author[S. Wang]{Shouda Wang}
\address{KTH Royal Institute of Technology \\ Department of Mathematics \\  Stockholm, 11428, Sweden \\ ORCID: 0000-0002-6400-2262}
\email{shoudawang@princeton.edu}

\begin{document}

\begin{abstract}
We prove that the B\'ezout inequality for mixed volumes characterizes simplices among full dimensional convex bodies in every dimension, resolving a conjecture of Soprunov and Zvavitch. We give two separate proofs of the conjecture. Along the way, we also prove characterizations of simplices in terms of longest chords or relative inradii.

\end{abstract}

\maketitle

\section{Introduction}

Minkowski's second inequality states that, for any two convex bodies $K,L\subset\R^n$, one has
\begin{equation}
\label{eq:Min2}
\V_n(K)\,
\V_n(K[n-2],L,L)
\leqslant
\V_n(K[n-1],L)^2,
\end{equation}
where $\V_n$ is the mixed volume in $\R^n$.
This inequality yields many other useful geometric inequalities such as the Brunn-Minkowski inequality and the isoperimetric inequality.
Of particular interest is another consequence of \eqref{eq:Min2} called \emph{Fenchel's inequality}:  for every triple $A,B,K\subset\R^n$ of convex bodies, one has
\begin{equation}
\label{eq:fenchel}
\V_n(K)\,
\V_n(K[n-2],A,B)
\leqslant
2
\V_n(K[n-1],A)
\V_n(K[n-1],B).
\end{equation}
This inequality is very reminiscent of  the classical Bézout inequality  from algebraic geometry (see \cite{SZ16}), where the only difference is that the constant in the Bézout inequality is $1$ rather  than $2$ in \eqref{eq:fenchel}.
One naturally asks whether Fenchel's inequality also holds with a  constant $c$ smaller than $2$
\begin{equation}
\label{eq:c-fenchel}
\V_n(K)\,\V_n(K[n-2],A,B) \leqslant c\V_n(K[n-1],A)\V_n(K[n-1],B).
\end{equation}
It is known, however, that the constant $2$ in Fenchel's inequality is sharp in the class of general convex bodies. In fact, equality  is achieved when $K$ is a cross-polytope and $A,B$ are certain line segments \cite{SM24}. 
On the other hand,
there has been  much progress showing that \eqref{eq:c-fenchel} indeed holds for some constant $c\in (1,2)$ under  additional geometric assumptions \cite{AFO14,FMMZ24,AS26,FHMNWZ26}.

The case $c=1$, i.e., the Bézout inequality,  was studied by Soprunov and Zvavitch \cite{SZ16}, who proved it for  all convex bodies $A,B$ when $K$ is a simplex. 
They furthermore conjectured that simplices are in fact the only convex bodies $K$ for which the  Bézout inequality holds for arbitrary $A,B$.

\begin{conjecture}
\label{conj}
Let $n\geq 2$ and let $K\subset \R^n$ be an $n$-dimensional convex body. Assume that for any two convex bodies $A,B\subset \R^n$,
\begin{equation}\label{eq1}
\V_n(K)\,\V_n(A,B,K[n-2]) \leqslant \V_n(K[n-1],A)\V_n(K[n-1],B)
\end{equation}
holds. Then $K$ must be an $n$-dimensional simplex.
\end{conjecture}
Soprunov and Zvavitch \cite{SZ16} already showed Conjecture~\ref{conj} when $n=2$, and Saroglou, Soprunov and Zvavitch \cite{SSZ16} proved Conjecture~\ref{conj} when $K$ is a convex polytope in $\R^n$. 
Later, they  \cite{SSZ19} proved that a variant of \eqref{eq1} for the mixed volume of $n$ arbitrary bodies characterizes simplices, which turned out to settle Conjecture~\ref{conj} in $\R^3$.  
Some partial progress towards Conjecture~\ref{conj} can be found in \cite{SM24,SM26}.

In the present paper, we prove Conjecture~\ref{conj} for all $n\geq 2$ using two different methods. The first is inspired by Chakerian's \cite{Cha67} proof of the Rogers-Shephard inequality and  needs only the special test pairs in \textup{(iii)} below.
The second proof uses inner parallel bodies. 

We introduce some notation.  For $u\in \Sn$, we denote by $\ell_K(u)$ the length of the longest chord in $K$ parallel to $u$.
And the relative inradius of $K$ with respect to a full dimensional convex body $M$ is defined by
$$
r(K,M)=\max\{t\geqslant0:z+tM\subseteq K
\text{ for some }z\in\R^n\}.
$$

\begin{theorem}\label{thm1}
For an $n$-dimensional convex body $K\subset\R^n$, $n\geq 2$, the following are equivalent.
\begin{enumerate}[label=\textup{(\roman*)},leftmargin=*,itemsep=2pt]
\item $K$ is an $n$-dimensional simplex.
\item Inequality \eqref{eq1} holds for all $A,B\in\cK^n$.
\item For every nonzero $x\in\operatorname{int}(K+(-K))$,
inequality \eqref{eq1} holds with
$A=[0,x]$ and $B=K\cap(K-x)$.
\item For every direction $u\in \Sn$, 
\begin{equation}\label{eq18}
    \ell_K(u) = \frac{\V_n(K)}{\V_n(K[n-1],[0,u])}.
\end{equation}

\item For every full dimensional convex body $M$,
\begin{equation}\label{eq2}
r(K,M)=\frac{\V_n(K)}{\V_n(K[n-1],M)}.
\end{equation}
\end{enumerate}
\end{theorem}
Once this theorem is proven, the equivalence between \textup{(i)} and \textup{(ii)} immediately confirms Conjecture \ref{conj}.
We remark that the original conjecture in \cite{SZ16} was stated with a parameter $r$.
They conjectured that, if $K$ is a full dimensional convex body such that 
$$
\V_n(A_1,\dots, A_r,K[n-r]) \V_n(K)^{r-1}\leq 
\prod_{i=1}^r \V_n(A_i, K[n-1])
$$
holds for arbitrary convex bodies $A_1,\dots, A_r$, then $K$ has to be a simplex. Conjecture \ref{conj} corresponds to the $r=2$ case. However, the cases where  $r>2$ can be  reduced to the $r=2$ case by choosing $A_3=\cdots=A_r=K$. Therefore, our solution to Conjecture \ref{conj} indeed solves the original conjecture \cite[Conj.1.1]{SZ16}.

The proof of Theorem \ref{thm1} is organized as follows.
The implication \textup{(i)}$\Rightarrow$\textup{(ii)} is \cite[Thm.2.5]{SZ16}. The implication \textup{(ii)}$\Rightarrow$\textup{(iii)} is trivial. In Section~\ref{sec:covario} we prove \textup{(iii)}$\Rightarrow$\textup{(iv)}$\Rightarrow$\textup{(i)}, and Section~\ref{sec:inner} proves \textup{(ii)}$\Rightarrow$\textup{(v)}$\Rightarrow$\textup{(i)}. Therefore, Sections \ref{sec:covario} and \ref{sec:inner} provide two separate proofs of Conjecture \ref{conj}.

At a high level,   \textup{(ii)}$\Rightarrow$\textup{(iv)} in Section \ref{sec:covario} and \textup{(ii)}$\Rightarrow$\textup{(v)} in Section \ref{sec:inner}   follow a common strategy.
We first express $\V_n(K)$ as an integral, using the layer cake
formula in the first argument and Alexandrov's variational formula
in the second.
We then combine \eqref{eq1} with Minkowski's first inequality / Alexandrov-Fenchel inequality
to bound the integrand.
It turns out that the integral of this upper bound is itself at most $\V_n(K)$,
which forces equality almost everywhere.
In particular, this yields \textup{(iv)} and \textup{(v)},
respectively.

The implication 
\textup{(iv)}$\Rightarrow$\textup{(i)} is proved using the characterization of simplices by the covariogram function. And the implication 
\textup{(v)}$\Rightarrow$\textup{(i)} is proved by first reducing to polytopes and then using the integral representation of mixed volumes by surface area measures.

\section{Preliminaries}
\label{sec:prelim}
We use standard facts from Schneider's book \cite{Sh1}. Let $n\geq2$, and let $\cK^n$ denote the class of convex bodies (non-empty compact convex sets) in $\R^n$. We write $\cK^n_n$ for the subclass of full dimensional convex bodies. Denote by $\Sn$  the unit Euclidean sphere.

For $K\in\cK^n$, write $h_K(u)=\max_{x\in K}\langle x,u\rangle$ for its support function. For $u\in\Sn$ and $t\in\R$, let $H_{u,t}=\{x\in\R^n:\langle x,u\rangle=t\}$ and $H^-_{u,t}=\{x\in\R^n:\langle x,u\rangle\leq t\}$. The exposed face of $K$ with outer normal $u$ is $F(K,u)=K\cap H_{u,h_K(u)}$. For any linear subspace $E$ of $\R^n$, denote by $P_{E}$ the  orthogonal projection onto $E$.

\subsection{Volume and mixed volume}\label{subsec:mixed-volumes}
The Minkowski sum of nonempty compact convex sets $K,L\subset\R^n$ is $K+L=\{x+y:x\in K,y\in L\}$. The Hausdorff distance between $K,L\in \cK^n$ is given by
$\delta_H(K,L)=\max\left\{\sup_{x\in K}\inf_{y\in L}|x-y|,\,\sup_{y\in L}\inf_{x\in K}|x-y|\right\}$.
We view $\cK^n$ as a metric space equipped with the Hausdorff metric $\delta_H$. Volume and mixed volume are continuous with respect to the Hausdorff metric.

We write $\V_n(K_1,\ldots,K_n)$ for the mixed volume of $K_1,\dots,K_{n}\in \cK^n$. Repeated arguments in mixed volumes and mixed area measures are indicated by brackets, that is,  $K[k]$ denotes $k$ copies of $K$. For example, $\V_n(K[n])=\V_n(K,\dots, K)$ is the volume of $K$, which will also be denoted by $\V_n(K)$.

Mixed volumes are symmetric in their arguments and multilinear \cite[eq.(5.26)]{Sh1}, i.e., for $K_1,\dots,K_{n-1},L,M\in \cK^n$ and $ a,t \ge 0$,
\begin{equation}
\label{eq:multi_linear}
\V_n(K_1,\ldots,K_{n-1},aL+tM) = a\V_n(K_1,\ldots,K_{n-1},L)+t\V_n(K_1,\ldots,K_{n-1},M).
\end{equation}
 The mixed volumes are also monotone, that is, if $L\subseteq M$, then
\begin{equation}
    \label{eq:mono}
    \V_n(K[n-1],L) \leq \V_n(K[n-1],M).
\end{equation}
For $v\in\Sn$, we have the projection formula, \cite[Thm.5.3.1]{Sh1},
\begin{equation}\label{eq9_2}
\V_{n-1}(P_{v^\perp}K_1,\dots, P_{v^\perp}K_{n-1}) = n\V_n(K_1,\dots, K_{n-1},[0,v]), \qquad K_i\in \cK^n
\end{equation}
Mixed volume can be represented by integration with respect to the mixed area measure $S_{K_1,\dots,{K_{n-1}}}$ of $K_1,\dots,K_{n-1}\in \cK^n$ (see  \cite[Thm.5.1.7]{Sh1}):
\begin{equation}\label{eq:mixed_volume_diff}
\V_n(K_1,\ldots,K_{n})
=\frac{1}{n}\int_{\Sn}h_{K_n}(u)\,dS_{K_1,\ldots,K_{n-1}}(u), \qquad K_i\in \cK^n.
\end{equation}
We write $S_K=S_{K[n-1]}$ for the surface area measure of $K$.
For a polytope $P\in\cK^n_n$, we have $S_P=\sum_F\V_{n-1}(F)\delta_{u_F}$, where the sum runs over its facets $F$ and $u_F$ is the outer unit normal of $F$. For $K\in\cK^n_n$, the support of $S_K$ is not contained in any closed hemisphere. In particular, it contains finitely many directions whose convex hull has the origin in its interior. 

Finally, we recall two  classical inequalities about mixed volumes.
One is  \textit{Minkowski's first inequality} \cite[Thm.7.2.1]{Sh1}:
\begin{equation}
    \V_n(K[n-1],L)^n \geq \V_n(K)^{n-1}\V_n(L).
    \label{eq:min_first}
\end{equation}
The other is the following consequence of the Alexandrov-Fenchel inequality
(see \cite[eq.(7.64)]{Sh1}):
\begin{equation}
    \V_n(K[n-2],L,M)^{n-1} \geq \V_n(K[n-1],M)^{n-2}\V_n(L[n-1],M).
    \label{eq:AF}
\end{equation}

\subsection{Inner parallel bodies}\label{subsec:inner-parallel}
The Wulff shape of a continuous function $h\in C(\Sn)$ is the convex subset of $\R^n$ given by the intersection of halfspaces $[h]=\bigcap_{u\in\Sn}H^-_{u,h(u)}$.
It follows that, if $[h]\neq \emptyset$, then $h_{[h]} \leq h$ holds pointwise. Moreover, for $K\in \cK^n$, $[h_K]=K$.

We recall Alexandrov's variational formula \cite[Lem.7.5.3]{Sh1}.
\begin{lemma}[Alexandrov's variational formula]\label{lem:alexandrov}
Let $K\in \cK^n_n$  and let $f\in C(\Sn)$. Then
\begin{equation*}
\lim_{t\to0}\frac{\V_n([h_K+tf])-\V_n(K)}{t}
=\int_{\Sn}f(u)\,dS_K(u).
\end{equation*}
\end{lemma}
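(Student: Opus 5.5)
We prove the formula by trapping the difference quotient between two bounds. Both bounds come from Minkowski's first inequality \eqref{eq:min_first} and the integral representation \eqref{eq:mixed_volume_diff}. Write $K_t=[h_K+tf]$ and $\|f\|=\max_{\Sn}|f|$.

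\textbf{Step 1: the Wulff shapes are well behaved.} After a translation, $K$ contains a ball $\rho B$ around the origin with $\rho>0$. For $|t|\|f\|<\rho$ we have $h_K+tf\ge h_K-|t|\|f\|$. Hence $K_t$ contains a homothetic copy of the shrunken body, so $K_t$ is full dimensional. We also have $K_t\subseteq [h_K+|t|\|f\|]=K+|t|\|f\|B$. These two inclusions give $K_t\to K$ in the Hausdorff metric as $t\to0$. By the weak continuity of surface area measures, $S_{K_t}\to S_K$ weakly.

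\textbf{Step 2: the support identity.} We need
\[
h_{K_t}=h_K+tf \qquad S_{K_t}\text{-a.e.}
\]
Since $h_{[h]}\le h$ always holds, it suffices to show the reverse inequality on $\supp S_{K_t}$. The standard argument is that $S_{[h]}$ is concentrated on the normals $u$ at which the constraint $\langle x,u\rangle\le h(u)$ is active. If $h_{[h]}(u)<h(u)$ on an open set of directions, those halfspaces can be dropped without changing $[h]$. For polytopes one checks this by approximation with finitely many constraints; in general it is Schneider's Lemma 7.5.1. This step is the main technical obstacle, and I would quote it rather than reprove it.

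\textbf{Step 3: two inequalities.} By \eqref{eq:mixed_volume_diff} and Step 2,
\[
\V_n(K_t)=\tfrac1n\int h_{K_t}\,dS_{K_t}=\V_n(K_t[n-1],K)+\tfrac tn\int f\,dS_{K_t}.
\]
Also, from $h_{K_t}\le h_K+tf$,
\[
\V_n(K[n-1],K_t)=\tfrac1n\int h_{K_t}\,dS_K\le \V_n(K)+\tfrac tn\int f\,dS_K.
\]

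\textbf{Step 4: bounds on the quotient.} Apply \eqref{eq:min_first} as $\V_n(K[n-1],K_t)^n\ge \V_n(K)^{n-1}\V_n(K_t)$. Combined with the second inequality of Step 3, this gives
\[
\V_n(K_t)\le \V_n(K)\Bigl(1+\tfrac{t}{n\V_n(K)}\int f\,dS_K\Bigr)^n .
\]
Apply \eqref{eq:min_first} the other way, as $\V_n(K_t[n-1],K)^n\ge \V_n(K_t)^{n-1}\V_n(K)$. Combined with the first identity of Step 3, this gives
\[
\V_n(K)\le \V_n(K_t)\Bigl(1-\tfrac{t}{n\V_n(K_t)}\int f\,dS_{K_t}\Bigr)^n .
\]

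\textbf{Step 5: conclusion.} Subtract $\V_n(K)$ in both bounds and divide by $t$, keeping track of the sign of $t$. Then let $t\to0$, using $\V_n(K_t)\to\V_n(K)$ and $\int f\,dS_{K_t}\to\int f\,dS_K$ from Step 1. A first-order expansion of the $n$-th powers shows that the $\limsup$ and $\liminf$ of the difference quotient, taken from each side, are both equal to $\int_{\Sn} f\,dS_K$. This proves the formula.
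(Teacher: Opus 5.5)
Your proof is correct and is essentially the standard one. The paper does not reprove this lemma; it cites \cite[Lem.~7.5.3]{Sh1}, whose proof is exactly this two-sided application of Minkowski's first inequality. That proof likewise relies on the identity $h_{[h]}=h$ $S_{[h]}$-a.e.\ that you quote, which applies here because your normalization $\rho B\subseteq K$, $|t|\,\|f\|<\rho$ makes $h_K+tf$ positive, and on the Hausdorff convergence $K_t\to K$ together with weak continuity of the surface area measures.
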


For $K,M\in\cK^n$, their \textit{Minkowski difference}
(see \cite[p.~146]{Sh1}) is $$K\ominus M=\{x\in\R^n:x+M\subseteq K\}=[h_K-h_M].$$
Recall that the \emph{relative inradius} $r(K,M)$ defined in the introduction
is the largest $r\geq0$ for which $K\ominus rM$ is nonempty.
For  $0\leq t\leq r(K,M)$, the convex body $K_t=K\ominus tM$ is called the inner parallel body of $K$ relative to $M$ at distance $t$.

Choose $z\in K\ominus r(K,M)M$.
Monotonicity and linearity of mixed volumes give
\[
\V_n(K)
\geq \V_n(K[n-1],z+r(K,M)M)
=r(K,M)\V_n(K[n-1],M).
\]
Thus, for any $K,M\in \cK^n_n$,
\begin{equation}
\label{eq:inradius_inequality}
r(K,M)\leq\frac{\V_n(K)}{\V_n(K[n-1],M)}.
\end{equation}
That is, \textup{(v)} in Theorem \ref{thm1}  holds with inequality for any $K$ and $M$. And the point of  Theorem \ref{thm1} is that \eqref{eq:inradius_inequality} holds with equality for all $M$ if and only if $K$ is a simplex.

 For completeness, we record some well-known  properties of inner parallel bodies.

\begin{proposition}\label{p:continuity}
Let $K,M\in\cK^n_n$, $r=r(K,M)$, and $K_t=K\ominus tM$. Then $K_t\in\cK^n_n$ for $0\leq t<r$, the family is  continuous in Hausdorff distance on $[0,r]$, and $\V_n(K_r)=0$.
\end{proposition}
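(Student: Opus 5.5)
The plan is to prove the three assertions in turn: full dimensionality of $K_t$ for $t<r$, Hausdorff continuity of $t\mapsto K_t$ on $[0,r]$, and $\V_n(K_r)=0$. Throughout we use $K_t=[h_K-th_M]=\{x: x+tM\subseteq K\}$.

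\emph{Full dimensionality.} Fix $t<r$. Pick $z\in K_r$, so that $z+rM\subseteq K$, and pick a point $m$ in the interior of $M$. Write $s=t/r\in[0,1)$. Since $K$ is convex,
\[
s(z+rM)+(1-s)K\subseteq K,
\]
and the left side equals $sz+(1-s)K+tM$. Hence $sz+(1-s)K\subseteq K_t$, and the right-hand set is full dimensional because $1-s>0$.

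\emph{Continuity.} Work with support functions. Since $t\mapsto th_M$ is continuous, it suffices to show that the Wulff-shape map $h\mapsto[h]$ is Hausdorff continuous at $h_K-th_M$ for each $t\in[0,r]$, along this monotone family. The family is decreasing in $t$: for $s<t$ we have $K_t+(t-s)M\subseteq K_s$, and also $K_t\subseteq K_s$. The key point is that all $K_t$ lie in the compact set $K$, so by Blaschke selection every subsequence of $K_{t_j}$, $t_j\to t$, has a further subsequence converging to some compact convex $L$.

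Closedness of the defining condition $x+t_jM\subseteq K$ gives $L\subseteq K_t$. For the reverse inclusion there are two cases.
\begin{itemize}
\item If $t_j\downarrow t$ with $t<r$: by the full-dimensionality construction applied between levels $t$ and $r$, there is a point $w_t\in K_t$ such that, with $\lambda=(t'-t)/(r-t)$,
\[
(1-\lambda)K_t+\lambda w_t\subseteq K_{t'}.
\]
This follows from the same convex-combination argument, now using $K_r$ and $K_t$. Letting $t'\to t$ yields $K_t\subseteq L$.
\item If $t_j\uparrow t$: the inclusion $K_t\subseteq K_{t_j}$ gives $K_t\subseteq L$ directly.
\end{itemize}
The case $t=r$ with $t_j\uparrow r$ is covered by the second case, so $L=K_t$ in all cases and continuity follows.

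\emph{Volume at $t=r$.} Suppose $\V_n(K_r)>0$. Then $K_r$ contains a ball $z+\varepsilon B$, so $z+\varepsilon B+rM\subseteq K$. Choose $m$ in the interior of $M$ with $m+\delta B\subseteq M$. By convexity, for small $\eta>0$,
\[
z-\eta m+(r+\eta)M\subseteq z+\varepsilon B+rM.
\]
To check this, note that $(r+\eta)M=rM+\eta M$ and that $\eta(M-m)\subseteq\eta\,\mathrm{diam}(M)B\subseteq\varepsilon B$ once $\eta$ is small. Thus $K_{r+\eta}\neq\emptyset$, contradicting the maximality in the definition of $r$. Hence $\V_n(K_r)=0$.

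The main obstacle is the right-continuity step $K_t\subseteq\lim K_{t'}$ for $t'\downarrow t$: the Wulff shape is in general only upper semicontinuous. It is the convex-combination inclusion that supplies the required lower semicontinuity here.
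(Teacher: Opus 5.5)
Your argument is correct in substance and is essentially the paper's proof. The inclusion $(1-\lambda)K_t+\lambda z\subseteq K_{t'}$, with $z\in K_r$ and $\lambda=\frac{t'-t}{r-t}$, is exactly the paper's key step. The paper turns it directly into the estimate $\delta_H(K_t,K_{t'})\le\frac{t'-t}{r-t}\operatorname{diam}(K)$ and handles $t\uparrow r$ through $K_r=\bigcap_{t<r}K_t$, whereas you use Blaschke selection and a two-sided inclusion. Your witness $sz+(1-s)K\subseteq K_t$ is an equally good substitute for the paper's $z+(r-t)M\subseteq K_t$. Your proof of $\V_n(K_r)=0$ is the same contradiction with the maximality of $r$.

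One slip has to be repaired. You never arrange $0\in M$, yet you use three facts that depend on it: $K_t\subseteq K$ (to apply Blaschke selection), $K_t\subseteq K_s$ for $s<t$ (in the case $t_j\uparrow t$), and $w_t\in K_t$. None of these holds for general $M$. Take $K=B$, the Euclidean unit ball, and $M=B+2e_1$. Then $K_t=(1-t)B-2te_1$ and $r=1$, so $K_{1/2}\not\subseteq K_0=K$. Moreover, the only point $w$ for which $(1-\lambda)K_t+\lambda w\subseteq K_{t'}$ holds is $w=-2e_1$, and this point lies in no $K_t$ with $t<1$.

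The fix is the paper's first line. Since $K\ominus t(M+a)=K_t-ta$, translating $M$ so that $0\in\operatorname{int}M$ only moves the family by the continuous translation $t\mapsto -ta$, which affects none of the three assertions.

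Alternatively, you can keep $M$ as is:
\begin{itemize}
\item note that all $K_t$ lie in the bounded set $K+[-rm_0,0]$ for any fixed $m_0\in M$;
\item handle $t_j\uparrow t$ with your correct inclusion $K_t+(t-t_j)M\subseteq K_{t_j}$, so that $x+(t-t_j)m_0\in K_{t_j}$ converges to $x\in K_t$;
\item take $w_t=z\in K_r$ without claiming it lies in $K_t$, since that membership is never used.
\end{itemize}
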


\begin{proof}
By translating $M$, we may assume that $0\in\operatorname{int}(M)$. It is clear from the definition that each $K_t$ is compact and convex. Choose $z\in K_r$. Then $z+(r-t)M\subseteq K_t$ for $t\leq r$, hence $K_t$ has nonempty interior for $t<r$. 

Now we show continuity in Hausdorff distance. Fix $0\leq t<s<r$ and set $\lambda=\frac{s-t}{r-t}$.
Since $K_s\subseteq K_t$, it suffices to estimate the distance from points of $K_t$ to $K_s$. 
Fix $x\in K_t$ and set $y=(1-\lambda)x+\lambda z$.
For every $m\in M$, $y+sm=(1-\lambda)(x+tm)+\lambda(z+rm)\in K$, and hence $y\in K_s$. Therefore, 
\begin{equation}
\nonumber
\delta_H(K_t,K_s)
\leq\frac{s-t}{r-t}\operatorname{diam}(K),
\qquad 0\leq t<s<r,
\end{equation}
where $\operatorname{diam}(K)=\sup_{x,y\in K}|x-y|$ is the diameter of $K$. We have thus proven the continuity in Hausdorff distance on $[0,r)$.
As $t\uparrow r$,  the sets $K_t$ decrease to $K_r$. Indeed, $K_r=\bigcap_{0\leq t<r}K_t$. Thus $t\mapsto K_t$ is Hausdorff continuous on $[0,r]$.

Finally, if $\V_n(K_r)>0$, then $K_r$ has nonempty interior. Since $0\in\operatorname{int}(M)$, there exist $x\in\R^n$ and $\varepsilon>0$ such that $x+\varepsilon M\subseteq K_r$. As $K_r+rM\subseteq K$, we would then have $x+(r+\varepsilon)M\subseteq K$, contradicting the definition of $r$. Hence $\V_n(K_r)=0$.
\end{proof}

\begin{lemma}\label{lem:inner-volume}
Let $K,M\in \cK^n_n$, and write $K_t=K\ominus tM$. Then for all $ 0 <t<r(K,M)$, we have
\begin{equation}\label{eq5}
-\frac{d}{dt^+}\V_n(K_t)=n\V_n(K_t[n-1],M),
\end{equation}
where $\frac{d}{dt^+}$ denotes the right derivative.
Moreover,
\begin{equation}\label{eq7}
\V_n(K)=n\int_0^{r(K,M)}\V_n(K_s[n-1],M)\,ds.
\end{equation}
\end{lemma}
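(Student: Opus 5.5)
The plan is to derive \eqref{eq5} from Alexandrov's variational formula (Lemma~\ref{lem:alexandrov}), using a semigroup-type inclusion for inner parallel bodies. Then \eqref{eq7} follows by integrating the right derivative, with the help of the continuity in Proposition~\ref{p:continuity}.

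\textbf{Step 1 (comparison).} Fix $0<t<r(K,M)$ and small $h>0$. We have $K_{t+h}=[h_K-(t+h)h_M]$. By Proposition~\ref{p:continuity}, $K_t\in\cK^n_n$. The key observation is the sandwich
\[
K_t\ominus hM\;\subseteq\; K_{t+h}\;\subseteq\;[h_{K_t}-h\,h_M].
\]
The left inclusion holds because $x+hM\subseteq K_t$ implies $x+hM+tM\subseteq K$. For the right inclusion, note that $(K\ominus tM)\ominus hM\subseteq K\ominus(t+h)M$ and, conversely, $K\ominus(t+h)M=(K\ominus tM)\ominus hM$; indeed, if $x+(t+h)M\subseteq K$ then $x+hM+tM\subseteq K$ since $M$ is convex, so $x+hM\subseteq K_t$. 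Hence $K_{t+h}=K_t\ominus hM=[h_{K_t}-h\,h_M]$ exactly.

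\textbf{Step 2 (apply Alexandrov).} Apply Lemma~\ref{lem:alexandrov} to the full dimensional body $K_t$ with $f=-h_M\in C(\Sn)$:
\[
\lim_{h\downarrow 0}\frac{\V_n(K_{t+h})-\V_n(K_t)}{h}=-\int_{\Sn}h_M\,dS_{K_t}=-n\V_n(K_t[n-1],M),
\]
where the last equality uses \eqref{eq:mixed_volume_diff}. This is \eqref{eq5}. The lemma gives the two-sided limit, so the right derivative exists and has this value.

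\textbf{Step 3 (integrate).} The function $\phi(t)=\V_n(K_t)$ is continuous on $[0,r]$ by Proposition~\ref{p:continuity}, with $\phi(r)=0$ and $\phi(0)=\V_n(K)$. Its right derivative $-n\V_n(K_t[n-1],M)$ is continuous in $t$ on $(0,r)$ by the Hausdorff continuity of $t\mapsto K_t$ and of mixed volumes. It is also bounded by $n\V_n(K[n-1],M)$, by monotonicity, since $K_t\subseteq K$.

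A continuous function on an interval whose right derivative exists everywhere and is continuous is differentiable, with derivative equal to the right derivative; this is a standard lemma, proved via the mean-value-type argument for Dini derivatives. So by the fundamental theorem of calculus on $[\epsilon,r-\epsilon]$,
\[
\phi(\epsilon)-\phi(r-\epsilon)=n\int_\epsilon^{r-\epsilon}\V_n(K_s[n-1],M)\,ds.
\]
Letting $\epsilon\to0$, continuity at the endpoints gives \eqref{eq7}.

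The main obstacle is Step 3's passage from right derivatives to the integral identity. I would handle it with the cited Dini-derivative lemma. Alternatively, I would note that $\phi^{1/n}$ is concave by Brunn--Minkowski, since $(1-\lambda)K_s+\lambda K_u\subseteq K_{(1-\lambda)s+\lambda u}$. Then $\phi$ is locally Lipschitz on $(0,r)$, hence absolutely continuous there, and its a.e.\ derivative agrees with the right derivative, which again yields \eqref{eq7}.
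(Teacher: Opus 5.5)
Your proposal is correct and follows the paper's argument. Both use the identity $K_{t+h}=K_t\ominus hM=[h_{K_t}-h\,h_M]$ (your ``sandwich'' is superfluous once you prove this equality), then Alexandrov's variational formula with $f=-h_M$ together with \eqref{eq:mixed_volume_diff}, and finally integration using the continuity from Proposition~\ref{p:continuity} and $\V_n(K_r)=0$. Your Step 3 merely spells out, via the Dini-derivative lemma or the concavity of $\V_n(K_t)^{1/n}$, the passage from a continuous right derivative to the integral identity, which the paper compresses into one sentence.
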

\begin{proof}
Denote $r=r(K,M)$.
Fix $0< t<r$. For  $s\in (0, r-t)$, we have
\[
K_{t+s}=K\ominus(t+s)M=(K\ominus tM)\ominus sM
=K_t\ominus sM=[h_{K_t}-sh_M].
\]
By Proposition~\ref{p:continuity}, $K_{t+s}$ is a convex body. Apply Lemma~\ref{lem:alexandrov} with $f=-h_M$, and then use \eqref{eq:mixed_volume_diff}, we obtain
\[
-\frac{d}{ds}\bigg|_{s=0^+}\V_n(K_{t+s})
=\int_{\Sn}h_M(u)\,dS_{K_t}(u)
=n\V_n(K_t[n-1],M),
\]
which proves \eqref{eq5}. By Proposition~\ref{p:continuity} and continuity of mixed volumes, the right-hand side of \eqref{eq5} is continuous on $[0,r]$. Integrating \eqref{eq5} and using $\V_n(K_r)=0$ yield \eqref{eq7}.
\end{proof}

\subsection{The covariogram}\label{subsec:covariogram}
In this subsection we recall   Chakerian's proof \cite{Cha67} of the Rogers--Shephard inequality \cite{RS57}, which directly motivates the proof in Section \ref{sec:covario}. Section \ref{sec:inner} also follows a similar strategy.

Fix a convex body $C$ with $o\in \operatorname{int}C$,
 the  \textit{Minkowski functional} of $C$  is given by $\|x\|_C=\inf\{t>0:x\in tC\}$, $x\in\R^n$.
For  $K\in \cK^n_n$,
the \textit{difference body} of $K$ is
$DK=K+(-K)=\{x\in\R^n:K\cap(K+x)\neq\emptyset\}$.
The Rogers--Shephard inequality states that
\begin{equation}
\label{eq:RS}
\V_n(DK)\leq\binom{2n}{n}\V_n(K).
\end{equation}
Equality holds in \eqref{eq:RS} for $K\in \cK^n_n$ if and only if $K$ is an $n$-dimensional simplex. Chakerian gave a proof of \eqref{eq:RS} based on the \textit{covariogram} of $K$, defined by
\begin{equation}
\label{eq:covario}
g_K(x)=\V_n(K\cap (K+x)), \qquad x\in \R^n.
\end{equation}
We recommend the reader consult Bianchi's survey \cite{GB23} for a thorough overview of the covariogram function. We  recall a few facts about the covariogram function:
\begin{enumerate}
    \item $x\mapsto g_K(x)^{1/n}$  is supported on $DK$ and is concave in $DK$;
    \item Since $g_K = 0$ on $\partial (DK)$ and  $g_K(0)=\V_n(K)$, it follows from (1) that for all $x\in DK$,
    \begin{equation}
    \label{eq:covario_support}
    g_K(x) \geq \V_n(K)(1-\|x\|_{DK})^n.
    \end{equation}
    \item Finally, $\V_n(K)^2=\int_{\R^n}g_K(x)\,dx$.
\end{enumerate}
The covariogram function can be used to characterize simplices \cite{RS57}; see also \cite[Prop.4.7]{HLPRY25}. Chakerian used this inequality to characterize the equality in \eqref{eq:RS}.

\begin{proposition}\label{p:RS}
A convex body $K\in \cK^n_n$ is a simplex if and only if, for every $u\in\Sn$, the function $s\mapsto g_K(su)^{1/n}$ is affine on $[0,\|u\|_{DK}^{-1}]$.
\end{proposition}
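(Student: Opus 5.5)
We prove the two directions separately. The argument rests on the identity $K\cap(K+x)$ for $x=su$ together with Brunn's concavity.

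\emph{Simplex $\Rightarrow$ affine.} Let $K$ be a simplex, fix $u\in\Sn$, and set $s_0=\|u\|_{DK}^{-1}$. The key fact is that for a simplex $K$ and any $x\in DK$, the body $K\cap(K+x)$ is a homothetic copy of $K$. To see this, write $K=\bigcap_{i=0}^n\{\langle y,v_i\rangle\le h_i\}$ with $n+1$ facet normals. Then
\[
K\cap(K+x)=\bigcap_{i}\{\langle y,v_i\rangle\le h_i+\min(0,\langle x,v_i\rangle)\}.
\]
Any intersection of $n+1$ halfspaces with these fixed normals is either empty or a translate of $\lambda K$ for some $\lambda\ge 0$. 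Here $\lambda$ is an affine function of the right-hand sides, namely
\[
\lambda=\frac{\sum_i c_i\,(\text{rhs})_i}{\sum_i c_i h_i},
\]
where $c_i>0$ are the coefficients of the linear relation $\sum_i c_iv_i=0$. Along the ray $x=su$, the signs of $\langle u,v_i\rangle$ are fixed, so each $\min(0,s\langle u,v_i\rangle)$ is linear in $s\ge 0$. Hence $\lambda(s)$ is affine in $s$, and $g_K(su)^{1/n}=\lambda(s)\V_n(K)^{1/n}$ is affine on $[0,s_0]$, where it vanishes at $s_0$.

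\emph{Affine $\Rightarrow$ simplex.} We follow Chakerian. Affineness on $[0,s_0]$ together with $g_K(0)=\V_n(K)$ and $g_K(s_0u)=0$ forces equality in \eqref{eq:covario_support} along every ray. So $g_K(x)=\V_n(K)(1-\|x\|_{DK})^n$ on all of $DK$. Integrating and using fact (3),
\[
\V_n(K)^2=\int g_K=\V_n(K)\V_n(DK)\int_0^1 n t^{n-1}(1-t)^n\,dt=\V_n(K)\V_n(DK)\binom{2n}{n}^{-1}.
\]
This is equality in \eqref{eq:RS}, so the cited Rogers--Shephard equality characterization gives that $K$ is a simplex.

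The main obstacle is whether that equality case may be invoked, since it is essentially the proposition itself. If a self-contained argument is wanted, it should go through the equality in Brunn--Minkowski. By concavity of $g_K^{1/n}$, equality along rays means that for $0<s<s_0$, the set $K\cap(K+su)$ is, up to translation, $(1-s/s_0)\bigl(K\cap K\bigr)$ blended with $\frac{s}{s_0}\bigl(K\cap(K+s_0u)\bigr)$. Since the endpoint set is lower dimensional, the inclusion
\[
K\cap(K+su)\supseteq (1-\tfrac{s}{s_0})K+\tfrac{s}{s_0}\bigl(K\cap(K+s_0u)\bigr)
\]
holding with equal volume gives equality of these sets. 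One would then deduce that every $K\cap(K+x)$ is homothetic to $K$, which is the classical characterization of simplices (Rogers--Shephard, Eggleston). I would cite that step rather than reprove it.
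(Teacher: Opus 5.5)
The paper does not prove Proposition~\ref{p:RS}. It quotes the result from \cite{RS57} (see also \cite[Prop.4.7]{HLPRY25}), so your proposal supplies more than the text does.

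\textbf{Simplex $\Rightarrow$ affine.} This direction is correct and self-contained. With $\sum_i c_iv_i=0$, $c_i>0$, the set $\bigcap_i\{\langle y,v_i\rangle\le b_i\}$ equals $z+\lambda K$ with $\lambda=\sum_ic_ib_i/\sum_ic_ih_i$ when $\lambda\ge0$, and is empty otherwise. Moreover $\lambda(su)\ge0$ on $[0,\|u\|_{DK}^{-1}]$ precisely because $su\in DK$ there; state this explicitly.

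\textbf{Affine $\Rightarrow$ simplex.} Your computation is right: the hypothesis forces $g_K(x)=\V_n(K)(1-\|x\|_{DK})^n$, hence equality in \eqref{eq:RS}. The problem is the citation you then use. In this paper the equality case of \eqref{eq:RS} is derived \emph{from} Proposition~\ref{p:RS}; that is the last step of Chakerian's argument in Section~\ref{subsec:covariogram}. So invoking it is circular unless you mean the original argument of \cite{RS57}, whose substance is the homothety characterization you name at the end.

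\textbf{Your Brunn--Minkowski route.} It does reach the hypothesis of that characterization, but the justification should not be that $E=K\cap(K+s_0u)$ is lower-dimensional; a segment $E$ would add volume. Use instead the sandwich $(1-t)K+te\subseteq(1-t)K+tE\subseteq K\cap(K+su)$, with $t=s/s_0$ and $e\in E$. The innermost and outermost bodies both have volume $(1-t)^n\V_n(K)$. Hence $K\cap(K+su)=(1-t)K+te$ is a homothet of $K$, and $E$ is a single point.

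\textbf{What remains.} The step that homothetic intersections force a simplex is the real content of the proposition. Both you and the paper leave it to the classical literature, so relative to the paper there is no gap. That lemma is, however, exactly what would have to be proved for a self-contained argument.
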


Finally, for a convex body $C$ with $0\in\operatorname{int}C$,
we have by polar integration that
 \begin{equation}
 \label{eq:volume_formula}
 \V_n(C)=\frac{1}{n}\int_{\Sn} \|\theta\|_C^{-n}d\theta,
 \end{equation}
 where $d\theta$ denotes the spherical Lebesgue measure.

 \begin{proof}[Chakerian's proof of \eqref{eq:RS} and equality characterization]
 By polar coordinate integration, \eqref{eq:covario_support},  and \eqref{eq:volume_formula}, we obtain
 \begin{align*}
     \V_n(K) &= \int_{\R^n} \frac{g_K(x) dx}{\V_n(K)} = \int_{\Sn}\int_0^{\|\theta\|_{DK}^{-1}}\frac{g_K(r\theta) r^{n-1}dr}{\V_n(K)} \, d\theta
     \\
     & \geq \int_{\Sn}\int_0^{\|\theta\|_{DK}^{-1}}(1-r\|\theta\|_{DK})^nr^{n-1} dr d\theta
     \\
     & = \int_{\Sn}\|\theta\|_{DK}^{-n}\int_0^{1}(1-r)^nr^{n-1} dr d\theta
     \\
     &= \V_n(DK) \left(n \int_0^1 (1-r)^nr^{n-1}dr\right).
 \end{align*}
 Finally, $\left(n \int_0^1 (1-r)^nr^{n-1}dr\right)=\binom{2n}{n}^{-1}$ via the Beta function. This proves the inequality. 
 
 When equality $ \binom{2n}{n}\V_n(K) = \V_n(DK)$ holds, the inequality \eqref{eq:covario_support} has to achieve equality for every $r$ and $\theta$. Hence $g_K(r\theta)^{\frac{1}{n}} = \V_n(K)^{\frac{1}{n}}(1-\|x\|_{DK})$ is an affine function on $[0,\|u\|_{DK}^{-1}]$ for every fixed $u\in \Sn$. Proposition~\ref{p:RS} now implies that $K$ is a simplex.
 \end{proof}

\section{Parallel chords and the covariogram}\label{sec:covario}
This section establishes the implications \textup{(iii)}$\Rightarrow$\textup{(iv)}$\Rightarrow$\textup{(i)} in Theorem~\ref{thm1}.

\subsection{Chord lengths and the layer cake formula}\label{subsec:chord-facts}
We first recall an identity for the volume of a convex body which will be used later.
For a convex body $K$ and a fixed direction $v\in \Sn$, we define $\ell_K(v)$ as the length of the longest chord of $K$ parallel to $v$. For $s\geq 0$, let $K^s=K\cap(K-sv)$. Then $\ell_K(v)$ is the largest $s\geq0$ for which $K^s$ is nonempty.
\begin{lemma}\label{lem3.1}
    Let $K\in \cK^n$. Then for any $v\in \Sn$ and any $t\in (0, \ell_K(v))$, we have
    \begin{equation}
    \nonumber
        g_K(-tv) = \V_n(K^t)  =  \int_{t}^{\ell_K(v)} \V_{n-1}(P_{v^\perp} K^s)\, ds.
    \end{equation}
\end{lemma}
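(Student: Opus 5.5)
The first equality is just the definition: $g_K(-tv)=\V_n(K\cap(K-tv))=\V_n(K^t)$. For the second, the plan is to apply the layer cake formula along lines parallel to $v$, i.e.\ Fubini over the decomposition $\R^n=v^\perp\oplus\R v$. For $y\in v^\perp$ the line $y+\R v$ meets $K$ in a segment (possibly empty) of length $\lambda(y)$. A point $y+\tau v$ lies in $K^s=K\cap(K-sv)$ exactly when both $y+\tau v$ and $y+(\tau+s)v$ lie in $K$. So the fibre of $K^s$ over $y$ is a segment of length $(\lambda(y)-s)_+$; it is nonempty precisely when $\lambda(y)\geq s$.

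The key steps, in order, are the following.

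First, by Fubini, $\V_n(K^t)=\int_{v^\perp}(\lambda(y)-t)_+\,dy$.

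Second, write $(\lambda(y)-t)_+=\int_t^{\infty}\mathbf 1\{\lambda(y)\geq s\}\,ds$. By Tonelli (everything is nonnegative),
\[
\V_n(K^t)=\int_t^{\infty}\mathcal H^{n-1}\bigl(\{y\in v^\perp:\lambda(y)\geq s\}\bigr)\,ds .
\]

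Third, identify the set $\{y\in v^\perp:\lambda(y)\geq s\}$ with $P_{v^\perp}K^s$. This holds because the fibre of $K^s$ over $y$ is nonempty iff $\lambda(y)\geq s$. Its $(n-1)$-measure is therefore $\V_{n-1}(P_{v^\perp}K^s)$.

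Fourth, since $\lambda(y)\leq \ell_K(v)$ for all $y$, the set is empty for $s>\ell_K(v)$, which truncates the integral at $\ell_K(v)$. The single value $s=\ell_K(v)$ has measure zero in $s$ and does not matter.

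There is no real obstacle here. The only care needed is in the fibre computation: one must check that the fibre of $K\cap(K-sv)$ over $y$ is exactly the segment $[a,b-s]$ when the fibre of $K$ is $[a,b]$. This follows from convexity, since fibres of $K$ are segments. One should also check the measurability needed for Tonelli, which is automatic since $\lambda$ is upper semicontinuous (it is concave on $P_{v^\perp}K$ by convexity of $K$). The case of lower-dimensional $K$ is harmless: both sides are then computed by the same Fubini argument, possibly both vanishing.
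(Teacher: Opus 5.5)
Your proposal is correct and is essentially the paper's own proof. Both use Fubini along lines parallel to $v$, the fibre computation giving length $(\lambda(y)-s)_+$ and hence $P_{v^\perp}K^s=\{y\in P_{v^\perp}K:\lambda(y)\geq s\}$, and the layer-cake rewriting of $(\lambda(y)-t)_+$ followed by exchanging the order of integration; the only differences are cosmetic (you integrate to $\infty$ and truncate at $\ell_K(v)$ afterwards, and your measurability justification is unnecessary because the superlevel sets are the compact sets $P_{v^\perp}K^s$).
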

\begin{proof}For any $y\in P_{v^\perp}K$, define 
\[
\ell_K(v;y)=\V_1(K\cap(y+\R v)), 
\]
which is the length of the chord $K\cap (y+\R v)$. 
Then we have $\ell_K(v)=\max_{y\in P_{v^\perp}K}\ell_K(v;y)$.
For $y\in P_{v^\perp}K$, the chord $K\cap(y+\R v)$ is a compact interval $I_y$ of length $\ell_K(v;y)$. Therefore the fiber $K^s\cap(y+\R v)$ corresponds to the intersection $I_y\cap(I_y-s)$, and hence
\begin{equation}\nonumber
\V_1(K^s\cap(y+\R v))=(\ell_K(v;y)-s)_+,
\qquad s\geq0.
\end{equation}
 In particular, the fiber $K^s\cap(y+\R v)$ is nonempty if and only if $\ell_K(v;y)\geq s$. Hence,
\[
P_{v^\perp}K^s
=
\{y\in P_{v^\perp}K:\ell_K(v;y)\geq s\},
\qquad s\geq0.
\]
Denote by $\chi_A$  the characteristic function of a set $A$, i.e. $\chi_A(x)=1$ for $x\in A$ and $\chi_A(x)=0$ for $x\notin A$. By Fubini's theorem, for $0\leq t\leq{\ell_K(v)}$ we have
\begin{align*}
\nonumber
g_K(-tv)&
=\V_n(K^t)= \int_{K^t} \chi_{K^t}(x)\,dx=\int_{P_{v^\perp}K^t} \int_{\R}\chi_{K^t}(y+sv)\,ds\,dy 
\\
&=\int_{P_{v^\perp}K^t} \V_1(K^t\cap(y+\R v))\,dy\\
&=\int_{P_{v^\perp}K}(\ell_K(v;y)-t)_+\,dy.
\end{align*}
Note that we may trivially write 
$(\ell_K(v;y)-t)_+ = \int_{t}^{\ell_K(v)} \chi_{\{s\leq \ell_K(v;y)\}}\,ds$.
Therefore, for $0\leq t\leq{\ell_K(v)}$ we have
\begin{equation}\label{eq11}
\begin{aligned}
g_K(-tv)&=\int_{P_{v^\perp}K}\int_{t}^{\ell_K(v)} \chi_{\{s\leq \ell_K(v;y)\}}\,ds\,dy=\int_t^{\ell_K(v)}\V_{n-1}(P_{v^\perp}K^s)\,ds.
\end{aligned}
\end{equation}
In particular,
\begin{equation}\label{eq:layercake}
\V_n(K)=\int_0^{\ell_K(v)}\V_{n-1}(P_{v^\perp}K^s)\,ds.
\end{equation}
We conclude.
\end{proof}

We also record two useful facts about $K^s=K\cap (K-sv)$ for full dimensional $K$. Denote $\ell=\ell_K(v)$.
First, we claim that for $0\leq s\leq\ell$, we have
\begin{equation}
\label{eq:M_inclusions}
K^s+[0,sv]\subseteq K.
\end{equation}
Indeed, if $x\in K^s$, then $x,x+sv\in K$, and hence the entire segment $x+[0,sv]=[x,x+sv]$ is contained in $K$. 

Second, let $a,a+\ell v\in K$ be the endpoints of a longest chord parallel to $v$. For $0\leq s\leq\ell$, convexity of $K$ readily implies
$\left(1-\frac{s}{\ell}\right)K+\frac{s}{\ell}a\subseteq K$.
Moreover,
\[
\left[
\left(1-\frac{s}{\ell}\right)K+\frac{s}{\ell}a
\right]+sv
=
\left(1-\frac{s}{\ell}\right)K
+\frac{s}{\ell}(a+\ell v)
\subseteq K.
\]
Therefore, for $0\leq s\leq\ell$ we have
\begin{equation}\label{eq:chord_homothety}
\left(1-\frac{s}{\ell}\right)K+\frac{s}{\ell}a\subseteq K^s.
\end{equation}
In particular, $K^s$ has nonempty interior for $0\leq s<\ell$.

\subsection{Proof of \texorpdfstring{$\textup{(iii)}\Rightarrow\textup{(iv)}\Rightarrow\textup{(i)}$}{(iii) ⇒ (iv) ⇒ (i)} in Theorem~\ref{thm1}}\label{subsec:chord-bezout}

We first show $\textup{(iii)}\Rightarrow\textup{(iv)}$. 
 The proof uses Lemma \ref{lem3.1} to express $\V_n(K)=\V_n(K^0)$ as an integral of $\V_{n-1}(P_{v^\perp} K^s)$, and then upper bounds $\V_{n-1}(P_{v^\perp} K^s)$ using Minkowski's first inequality and the assumption \textup{(iii)}. The integral of the upper bound turns out to be equal to $\V_n(K)$, hence equality must be achieved almost everywhere, which in particular implies \textup{(iv)}.
\begin{proof}[Proof of $\textup{(iii)}\Rightarrow\textup{(iv)}$ in Theorem~\ref{thm1}]

We assume that $K\in \cK^n_n$ satisfies \textup{(iii)}. That is, for every $v\in\Sn$ and $0<s<\ell_K(v)$, \eqref{eq1} holds for $K$ with $A=[0,sv]$ and $B=K\cap(K-sv)$. 
We are going to prove that \textup{(iv)} holds, which is $\ell_K(u)=\frac{\V_n(K)}{\V_n(K[n-1],[0,u])}$.

Fix $v\in\Sn$, and write $K^s=K\cap(K-sv)$ and $\ell=\ell_K(v)$.
We first show an upper bound on $\V_n(P_{v^\perp}K^s)$. 
For $0<s<\ell$, the assumed B\'ezout inequality \eqref{eq1} and linearity \eqref{eq:multi_linear} yield
\begin{align*}
\V_n(K)\V_n(K[n-2],K^s,[0,v])\leq
\V_n(K[n-1],K^s)\V_n(K[n-1],[0,v]).
\end{align*}
On the other hand, the inclusion \eqref{eq:M_inclusions} and the monotonicity \eqref{eq:mono} and multilinearity \eqref{eq:multi_linear} of mixed volumes yield
\[
\V_n(K[n-1],K^s)
+s\V_n(K[n-1],[0,v])
\leq \V_n(K).
\]
Combining the two, for all $s\in (0,\ell)$ we obtain
\begin{align*}
\V_n(K[n-2],K^s,[0,v])
\leq
\V_n(K[n-1],[0,v])
\left(
1-\frac{\V_n(K[n-1],[0,v])}{\V_n(K)}s
\right).
\end{align*}
By the projection formula \eqref{eq9_2}, we have 
\begin{align}
\V_{n-1}((P_{v^\perp}K)[n-2],P_{v^\perp}K^s)
\leq
\V_{n-1}(P_{v^\perp}K)
\left(
1-\frac{\V_{n-1}(P_{v^\perp}K)}{n\V_n(K)}s
\right).
\label{eq30}
\end{align}
In particular, $1-\frac{\V_{n-1}(P_{v^\perp}K)}{n\V_n(K)}s\geq 0$ for $0<s<\ell$. Hence 
\begin{equation}
\nonumber
\bar\ell:=
\frac{\ell\,\V_{n-1}(P_{v^\perp}K)}{n\V_n(K)}
\leq1.
\end{equation}
Now by Minkowski's first inequality \eqref{eq:min_first} in $v^\perp$, we have
\[
\V_{n-1}\big((P_{v^\perp}K)[n-2],P_{v^\perp}K^s\big)^{n-1}
\geq
\V_{n-1}(P_{v^\perp}K)^{n-2}
\V_{n-1}(P_{v^\perp}K^s).
\]
Note this is merely equality when $n=2$. Combining this with \eqref{eq30}, we obtain
\begin{equation}\label{eq12}
\V_{n-1}(P_{v^\perp}K^s)
\leq
\V_{n-1}(P_{v^\perp}K)
\left(
1-\frac{\V_{n-1}(P_{v^\perp}K)}{n\V_n(K)}s
\right)^{n-1},
\qquad 0<s<\ell.
\end{equation}

Now that we have obtained an upper bound on $\V_{n-1}(P_{v^\perp}K^s)$, we may proceed to upper bounding $\V_n(K)$ using Lemma \ref{lem3.1}.
Now Lemma~\ref{lem3.1}, \eqref{eq12}, a change of variables, and $\bar \ell\leq 1$ yield
\begin{align*}
\V_n(K)
&=\int_0^\ell\V_{n-1}(P_{v^\perp}K^s)\,ds\\
&\leq\V_{n-1}(P_{v^\perp}K)
\int_0^\ell
\left(
1-\frac{\V_{n-1}(P_{v^\perp}K)}{n\V_n(K)}s
\right)^{n-1}\,ds\\
&=n\V_n(K)\int_0^{\bar\ell}(1-s)^{n-1}\,ds
\leq\V_n(K).
\end{align*}
All inequalities are therefore equalities. In particular, equality in the last step forces $\bar\ell=1$. Thus
\[
\ell_K(v)=\frac{n\V_n(K)}{\V_{n-1}(P_{v^\perp}K)}
=\frac{\V_n(K)}{\V_n(K[n-1],[0,v])},
\]
which completes the proof.
\end{proof}

We now prove the implication $\textup{(iv)}\Rightarrow\textup{(i)}$ using the characterization of simplices by the covariogram function  from Proposition \ref{p:RS}.
\begin{proof}[Proof of $\textup{(iv)}\Rightarrow\textup{(i)}$ in Theorem~\ref{thm1}]
Suppose that \textup{(iv)} holds. Fix $v\in\Sn$, write $\ell=\ell_K(v)$, and set $K^s=K\cap(K-sv)$.
Projecting the inclusion \eqref{eq:chord_homothety} onto $v^\perp$ yields
\begin{equation}
\V_{n-1}(P_{v^\perp}K^s)\geq\V_{n-1}(P_{v^\perp}K)
\left(1-\frac{s}{\ell}\right)^{n-1},
\qquad 0\leq s\leq\ell.
\label{eq20}
\end{equation}
Together with \eqref{eq:layercake}, this gives
\begin{align*}
\V_n(K)
&=\int_0^\ell\V_{n-1}(P_{v^\perp}K^s)\,ds\\
&\geq\V_{n-1}(P_{v^\perp}K)
\int_0^\ell\left(1-\frac{s}{\ell}\right)^{n-1}\,ds\\
&=\frac{\ell}{n}\V_{n-1}(P_{v^\perp}K)
=\V_n(K),
\end{align*}
where the last equality follows from \textup{(iv)} and \eqref{eq9_2} with  $K_1=\cdots=K_{n-1}=K$. Thus \eqref{eq20} is an equality for almost every $s\in(0,\ell)$. Since $g_K$ is continuous, there is equality everywhere. Substituting into \eqref{eq11} gives
\[
g_K(-sv)=\V_n(K)\left(1-\frac{s}{\ell}\right)^n,
\qquad 0\leq s\leq\ell.
\]
Since $v$ was arbitrary, Proposition~\ref{p:RS} implies that $K$ is a simplex.
\end{proof}

\section{Inner parallel bodies}\label{sec:inner}
This section proves the implications \textup{(ii)}$\Rightarrow$\textup{(v)}$\Rightarrow$\textup{(i)} in Theorem~\ref{thm1}.
The required facts about inner parallel bodies were established in Section~\ref{subsec:inner-parallel}. We first show that if $K$ satisfies the B\'ezout inequality, then the inequality \eqref{eq:inradius_inequality} becomes an equality for every $M$. That is, we prove that \textup{(ii)} implies \textup{(v)} in Theorem~\ref{thm1}.

\begin{proof}[Proof that $\textup{(ii)}\Rightarrow\textup{(v)}$ in Theorem~\ref{thm1}]
We prove that if $K\in \cK^n_n$ satisfies \eqref{eq1} for every $A,B\in\cK^n$,
then \eqref{eq2} holds for every  $M\in\cK^n_n$.

Fix  $M\in \cK^n_n$ and write $r=r(K,M)$. By translation invariance, we may assume that $0\in\operatorname{int}(M)$. For $0\leq t\leq r$, set $K_t=K\ominus tM$. Proposition~\ref{p:continuity} shows that $K_t$ is a convex body for $t<r$, and Lemma~\ref{lem:inner-volume} gives the integral identity \eqref{eq7}.

For $0\leq s<r$, using the iterated form of the Alexandrov-Fenchel inequality \eqref{eq:AF}, followed by \eqref{eq1}, we obtain
\begin{align}
\nonumber
\V_n(K_s[n-1],M)
&\leq
\frac{\V_n(K[n-2],K_s,M)^{n-1}}
{\V_n(K[n-1],M)^{n-2}}\\
\nonumber
&\leq
\frac{\V_n(K[n-1],M)}{\V_n(K)^{n-1}}
\V_n(K[n-1],K_s)^{n-1}\\
\label{eq13}
&\leq
\frac{\V_n(K[n-1],M)}{\V_n(K)^{n-1}}
\bigl(\V_n(K)-s\V_n(K[n-1],M)\bigr)^{n-1}.
\end{align}
The last inequality follows from $K_s+sM\subseteq K$, by monotonicity
and multilinearity of mixed volumes.

Set $\bar r=r\V_n(K[n-1],M)/\V_n(K)\leq1$.
Combining \eqref{eq13} with \eqref{eq7} and making the substitution
$t=s\V_n(K[n-1],M)/\V_n(K)$ gives
\begin{align}
\nonumber
\V_n(K)
&\leq n\int_0^r
\frac{\V_n(K[n-1],M)}{\V_n(K)^{n-1}}
\bigl(\V_n(K)-s\V_n(K[n-1],M)\bigr)^{n-1}\,ds\\
\label{eq6}
&=n\V_n(K)\int_0^{\bar r}(1-t)^{n-1}\,dt
\leq\V_n(K).
\end{align}
All inequalities in \eqref{eq6} are therefore equalities.
Since $(1-t)^{n-1}>0$ for $0\leq t<1$, equality in the last step
forces $\bar r=1$. Thus
$r(K,M)=\V_n(K)/\V_n(K[n-1],M)$, as desired.
\end{proof}

We now proceed to the proof that \textup{(v)} implies \textup{(i)} in Theorem~\ref{thm1}. 
We will first show this for polytopes, and then show that condition \textup{(v)} in fact always implies $K$ is a polytope.

\begin{lemma}\label{lem1}
Let $K\subset\R^n$ be a full-dimensional polytope. Assume that $r(K,M)\V_n(M,K[n-1])=\V_n(K)$ holds for every full-dimensional polytope $M$. Then $K$ is a simplex.
\end{lemma}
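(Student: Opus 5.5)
The plan is to translate $r(K,M)$ into a linear program over the facet data of $K$, and then show that when $K$ has more than $n+1$ facets, a small perturbation of $K$ itself gives an $M$ violating the hypothesis.

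\textbf{Setup.} Let $u_1,\dots,u_m$ be the outer unit normals of the facets of $K$, with facet areas $F_i>0$ and support values $h_i=h_K(u_i)$. After translating, assume $0\in\operatorname{int}K$, so every $h_i>0$. Since $K=\bigcap_i H^-_{u_i,h_i}$, we have $z+tM\subseteq K$ if and only if $\langle z,u_i\rangle+t\,h_M(u_i)\le h_i$ for all $i$. Writing $b_i=h_M(u_i)$, the inradius is therefore the value of the linear program
\[
r(K,M)=\max\{t:\ \langle z,u_i\rangle+tb_i\le h_i\ \forall i\}.
\]
By linear programming duality,
\[
r(K,M)=\min\Big\{\textstyle\sum_i\lambda_ih_i:\ \lambda\ge0,\ \sum_i\lambda_iu_i=0,\ \sum_i\lambda_ib_i=1\Big\}.
\]
Here $0\in\operatorname{int}M$ makes the primal strictly feasible, so the duality applies.

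\textbf{The hypothesis as an optimality condition.} By \eqref{eq:mixed_volume_diff} and $S_K=\sum_iF_i\delta_{u_i}$,
\[
\V_n(K[n-1],M)=\tfrac1n\sum_iF_ib_i,\qquad \V_n(K)=\tfrac1n\sum_iF_ih_i .
\]
The vector $\lambda=F/\sum_iF_ib_i$ is dual feasible, because $\sum_iF_iu_i=0$ (Minkowski's relation). Its objective value is exactly $\V_n(K)/\V_n(K[n-1],M)$. So the hypothesis of Lemma~\ref{lem1} says that $F$ minimizes the degree-zero homogeneous ratio
\[
\Phi_b(\lambda)=\frac{\langle\lambda,h\rangle}{\langle\lambda,b\rangle}
\]
over the cone $C=\{\lambda\ge0,\ \sum_i\lambda_iu_i=0\}$. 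The linear span of $C$ is $W=\{\lambda\in\R^m:\sum_i\lambda_iu_i=0\}$. This has dimension $m-n$, since the $u_i$ span $\R^n$. Because all $F_i>0$, $F$ lies in the relative interior of $C$ in $W$. An interior minimizer is a critical point along $W$. Hence
\[
P_W h=\Phi_b(F)\,P_W b .
\]
Thus $P_Wh$ and $P_Wb$ must be parallel for every admissible $b$.

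\textbf{The perturbation.} Suppose $m\ge n+2$, so $\dim W\ge2$. Note $P_Wh\ne0$, since $\langle F,h\rangle=n\V_n(K)>0$ and $F\in W$. Choose $c\in\R^m$ whose projection $P_Wc$ is not parallel to $P_Wh$. Set $M=\bigcap_i H^-_{u_i,h_i+\varepsilon c_i}$ for small $\varepsilon>0$. Then $M$ is a full-dimensional polytope with $0\in\operatorname{int}M$.

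The step I expect to need the most care is confirming that $h_M(u_i)=h_i+\varepsilon c_i$ for all $i$, i.e.\ that no constraint becomes redundant for small $\varepsilon$. This holds because each facet of $K$ has positive $(n-1)$-volume: a point in the relative interior of a facet, moved slightly, still satisfies all other constraints strictly while attaining the perturbed $i$-th one. The same argument shows no new facet normals appear.

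With $b=h+\varepsilon c$ we get $P_Wb=P_Wh+\varepsilon P_Wc$, which is not parallel to $P_Wh$. This contradicts the criticality condition above. Hence $m\le n+1$. A full-dimensional polytope has at least $n+1$ facets, so $m=n+1$ and $K$ is a simplex.
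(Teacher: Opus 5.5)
Your argument is correct, and it takes a genuinely different route from the paper.

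\textbf{The paper's route.} The paper uses neither duality nor first-order conditions. By Carath\'eodory it picks a set $I$ of at most $n+1$ facet normals with a positive relation $\sum_{i\in I}\lambda_iu_i=0$. If $m>n+1$, it takes for $M$ the body $K$ with a thin slab cut off a facet $j\notin I$. Summing the containment constraints over $I$ with weights $\lambda_i$ pins $r(K,M)=1$; this is exactly your weak-duality step, with the explicit certificate $\lambda_I$. Meanwhile $\V_n(M,K[n-1])=\V_n(K)-\frac{\varepsilon}{n}\V_{n-1}(F(K,u_j))<\V_n(K)$.

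\textbf{Your route in the same language.} The paper compares the interior dual point $F$ with the boundary point $\lambda_I$, showing $\Phi_b(\lambda_I)=1<\Phi_b(F)$. You instead use that an interior minimizer must satisfy $P_Wh=\Phi_b(F)P_Wb$, and you break this with a generic perturbation of all support numbers.

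\textbf{What each buys.} The paper's version is shorter and needs no calculus. Its non-redundancy check is only for the $m-1$ untouched facets, and it follows at once from $M=K\cap H^-_{u_j,h_K(u_j)-\varepsilon}$. Yours is more structural. For a simplex, $W$ is the line spanned by $F$, so the dual feasible set is a single ray and LP duality makes the identity automatic. When $m\ge n+2$, your argument shows the identity fails for generic small perturbations of $K$ with the same normals, not just for one specially chosen truncation.

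\textbf{A minor correction.} For this direction you only need weak duality, $r(K,M)\le\Phi_b(\lambda)$ for $\lambda\in C$ with $\langle\lambda,b\rangle>0$, which is the one-line summation argument. The appeal to strong duality is unnecessary. The justification you give for it is also slightly off: strict feasibility of $(z,t)=(0,0)$ comes from $0\in\operatorname{int}K$, not from $0\in\operatorname{int}M$. In any case, LP strong duality needs only feasibility and a finite optimum.
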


\begin{proof}
Translate $K$ so that $0\in\operatorname{int}K$, and write its (irredundant) facet representation as
$$
K=\bigcap_{i=1}^m\{x:\langle u_i,x\rangle\le h_K(u_i)\},
$$
where $u_i$ are unit outer normals of facets. By Carath\'eodory's theorem, there exists a set $I\subseteq\{1,\ldots,m\}$ with $|I|\le n+1$ and positive weights $\lambda_i$ such that $\sum_{i\in I}\lambda_i u_i=0$.

Assume $m>n+1$. Then we may choose $j\notin I$. Consider
$$
M=K\cap H^-_{u_j, h_K(u_j)-\varepsilon}.
$$
For sufficiently small $\varepsilon>0$, this polytope is full-dimensional and $h_K(u_i) = h_M(u_i)$ for every $i\neq j$.

We claim $r(K,M)=1$. Indeed, since $M\subseteq K$, we have $r(K,M)\ge1$. Conversely, assume $x+tM\subseteq K$ for some $x\in\R^n$ and $t\geq0$. Then $$\langle u_i,x\rangle+t h_K(u_i)\le h_K(u_i), \qquad  i\in I.$$ Multiplying both sides by $\lambda_i$ and summing over $i\in I$ yields
$$t\sum_{i\in I}\lambda_i h_K(u_i)
\le \sum_{i\in I}\lambda_i h_K(u_i),$$
where we used $\sum_{i\in I} \lambda_i u_i=0$. Since $\lambda_i h_K(u_i)>0$, we have $t\le1$ and therefore $r(K,M)\leq 1$.
Combining this with the lower bound, we obtain $r(K,M)=1$ as desired. 

However, we also have
$$
\begin{aligned}
\V_n(M,K[n-1])
&=\frac1n\sum_{i=1}^m \V_{n-1}(F(K,u_i))h_M(u_i) =\V_n(K)-\frac{\varepsilon}{n}\V_{n-1}(F(K,u_j))
<\V_n(K).
\end{aligned}
$$
This contradicts $\V_n(K) = r(K,M)\V_n(K[n-1],M)=\V_n(K[n-1],M)$.
Therefore, $m=n+1$. That is, $K$ has $n+1$ facets and hence is a simplex.
\end{proof}

We now show that \textup{(v)} forces $K$ to be a polytope.

\begin{lemma}\label{lem3}
     Let $K\in\cK^n_n$ and assume that $r(K,M) = \frac{\V_n(K)}{\V_n(M,K[n-1])}$ holds for every  $M\in\cK^n_n$.
    Then $K$ is a polytope.
\end{lemma}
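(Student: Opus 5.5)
The plan is to run the argument of Lemma~\ref{lem1} again, with the surface area measure $S_K$ replacing the finite list of facet normals. We argue by contradiction and assume $K$ is not a polytope. A convex body $K\in\cK^n_n$ is a polytope if and only if $S_K$ has finite support, so $\supp S_K$ is an infinite closed subset of $\Sn$. As recalled in Section~\ref{sec:prelim}, $\supp S_K$ is not contained in any closed hemisphere. Removing one point $u_0$ from an infinite closed set with this property leaves a set that still positively spans $\R^n$. Hence we can choose $u_0\in\supp S_K$, a closed neighbourhood $U$ of $u_0$, and, by Carath\'eodory, directions $u_i\in\supp S_K\setminus U$ for $i\in I$, $|I|\le n+1$, with positive weights $\lambda_i$ satisfying $\sum_{i\in I}\lambda_iu_i=0$. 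After translating we assume $0\in\operatorname{int}K$, so that $h_K(u_i)>0$.

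Next we cut a cap in direction $u_0$. Set $M=K\cap H^-_{u_0,h_K(u_0)-\varepsilon}$ for small $\varepsilon>0$; this is a full dimensional convex body with $M\subseteq K$. Suppose that for each $i\in I$ the face $F(K,u_i)$ is not contained in $F(K,u_0)$. Pick $x_i\in F(K,u_i)$ with $\langle u_0,x_i\rangle<h_K(u_0)$. For $\varepsilon$ small, every $x_i$ lies in $M$, so $h_M(u_i)=h_K(u_i)$ for all $i\in I$. The Carath\'eodory computation from Lemma~\ref{lem1} then applies verbatim. If $x+tM\subseteq K$, then $\langle u_i,x\rangle+t\,h_K(u_i)\le h_K(u_i)$ for each $i\in I$. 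Weighting by $\lambda_i$ and summing gives $t\le 1$, so $r(K,M)=1$. On the other hand, by \eqref{eq:mixed_volume_diff},
\[
\V_n(K)-\V_n(M,K[n-1])=\frac1n\int_{\Sn}(h_K-h_M)\,dS_K .
\]
The integrand is continuous and nonnegative, and at $u_0$ it is at least $\varepsilon$. It is therefore positive on a neighbourhood of $u_0$, and this neighbourhood has positive $S_K$-measure because $u_0\in\supp S_K$. Hence $\V_n(M,K[n-1])<\V_n(K)=r(K,M)\V_n(M,K[n-1])$, which contradicts the hypothesis.

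The main obstacle is the non-containment condition $F(K,u_i)\not\subseteq F(K,u_0)$. For a polytope this is automatic because distinct facets are distinct, but for a general body it fails exactly when $u_0$ lies in the normal cone of the face $F(K,u_i)$. My first attempt is to fix $u_0$ first and choose it well. The set of $u\in\Sn$ for which $F(K,u)$ has dimension at least one is countable up to an $\mathcal H^{n-1}$-null set. Because $\supp S_K$ is infinite, I expect to find $u_0\in\supp S_K$ that lies in the interior of no normal cone $N(K,F)$ with $\dim N(K,F)=n$, that is, $u_0$ is not the normal at a vertex of $K$. The $u_i$ would then be chosen from $\supp S_K\setminus U$, avoiding the finitely or countably many normal cones that contain $u_0$, using the freedom that remains in the positive spanning set.

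If this selection turns out to be delicate, the fallback is a quantitative version. Allow $h_M(u_i)\ge h_K(u_i)-\delta(\varepsilon)$, which gives $r(K,M)\le 1+O(\delta)$. Then show that $\int(h_K-h_M)\,dS_K$ dominates $\delta(\varepsilon)\,\V_n(M,K[n-1])$, by placing the $u_i$ far from $u_0$ so that $\delta(\varepsilon)$ vanishes for small $\varepsilon$.
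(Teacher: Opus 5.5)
\textbf{Gap: the choice of directions.} The gap is exactly the step you call the main obstacle, and neither of your routes closes it.

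For small $\varepsilon$, $h_M(u_i)=h_K(u_i)$ holds if and only if $F(K,u_i)\not\subseteq F(K,u_0)$. The distance from $u_i$ to $u_0$ plays no role. So your fallback (``place the $u_i$ far from $u_0$ so that $\delta(\varepsilon)$ vanishes'') needs the same unproved condition.

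Your primary plan does not secure the condition either:
\begin{enumerate}
\item Requiring that $u_0$ not lie in the interior of an $n$-dimensional normal cone is vacuous. No point of $\supp S_K$ lies there: a small neighbourhood $\omega$ of such a point has a single vertex as reverse spherical image, so $S_K(\omega)=0$.
\item The $\mathcal H^{n-1}$-genericity of singleton faces says nothing about $\supp S_K$, which can be $\mathcal H^{n-1}$-null. For a polytope, every $u\in\supp S_K$ has a facet as $F(K,u)$.
\item The set you must avoid, $\{u:F(K,u)\subseteq F(K,u_0)\}$, is in general not a countable union of normal cones. For the top facet of a cylinder $D\times[0,1]$ it is the whole open upper hemisphere.
\end{enumerate}

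The obstruction is real. In the plane, let edges accumulate at a vertex $q$ from one side, and let $u_0$ be the normal of the edge ending at $q$ on the other side. The limit $w_\infty\in\supp S_K$ of the accumulating edge normals then satisfies $F(K,w_\infty)=\{q\}\subseteq F(K,u_0)$. So an explicit selection rule is required.

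A minor point: deleting an arbitrary point of $\supp S_K$ need not preserve positive spanning. For the half-disc $\{x\in\R^2:|x|\le1,\ x_1\le0\}$, deleting $e_1$ leaves a set inside a closed half-circle. This is repaired by taking $u_0$ outside a fixed finite positively spanning subset.

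\textbf{Missing idea and comparison with the paper.} Take the $u_i$ among outer normals at \emph{regular} boundary points.
\begin{enumerate}
\item $S_K$ is the image of $\mathcal H^{n-1}$ on the regular part of $\partial K$ under the Gauss map, so these normals carry all of $S_K$. Hence they lie in no closed hemisphere.
\item Carath\'eodory then gives such $u_i$, $|I|\le n+1$, with $\sum_{i\in I}\lambda_iu_i=0$. Each $u_i$ is the unique normal at some regular point $p_i$.
\item If $u_0\ne u_i$, then $p_i\notin F(K,u_0)$, since otherwise $u_0$ would be a second normal at $p_i$. So $p_i\in M$ for small $\varepsilon$, and $h_M(u_i)=h_K(u_i)$.
\end{enumerate}
Your cap argument then works for every $u_0\in\supp S_K\setminus\{u_i\}_{i\in I}$. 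It yields $\supp S_K\subseteq\{u_i\}_{i\in I}$, which proves the lemma and even shows directly that $K$ is a simplex, bypassing Lemma~\ref{lem1}.

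The paper avoids caps and face geometry entirely. It takes $M=P=\bigcap_i H^-_{v_i,h_K(v_i)}$ with $v_i\in\supp S_K$ positively spanning. The hypothesis then gives $\int(h_K-h_Q)\,dS_K=0$ for the inscribed homothet $Q=a+r(K,P)P$. This forces $h_Q(v_i)=h_P(v_i)$, hence $Q=P$ and so $K=P$.
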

\begin{proof}
 Since $K$ has nonempty interior, we may choose finitely many directions $v_1,\ldots,v_m\in\supp S_K$ such that
$0\in\operatorname{int}\conv\{v_1,\ldots,v_m\}$.
Then
\[
P=\bigcap_{i=1}^m H^-_{v_i,h_K(v_i)}
\]
is a bounded polytope containing $K$. Note that $h_P(v_i)=h_K(v_i)$ for every $i$.

Choose $a\in\R^n$ such that
$Q=a+r(K,P)P\subseteq K$. 
Then, by assumption and \eqref{eq:mixed_volume_diff}, we have
\[
\int_{\Sn}(h_K-h_Q)\,dS_K
=n\bigl(\V_n(K)-r(K,P)\V_n(K[n-1],P)\bigr)=0.
\]
Since $h_K-h_Q$ is continuous and nonnegative, it vanishes on $\supp S_K$. In particular, $h_Q(v_i)=h_K(v_i)=h_P(v_i)$ for every $i=1,\ldots,m$.
Since $Q=a+r(K,P)P$ and
\[
P=\bigcap_{i=1}^m H^-_{v_i,h_P(v_i)},
\]
we have
\[
Q
=
\bigcap_{i=1}^m
H^-_{v_i,\langle a,v_i\rangle+r(K,P)h_P(v_i)}
=
\bigcap_{i=1}^m H^-_{v_i,h_Q(v_i)}
=
\bigcap_{i=1}^m H^-_{v_i,h_P(v_i)}
=
P.
\] But $Q\subseteq K\subseteq P$. Therefore, $Q=K=P$ and $K$ is a polytope as claimed.
\end{proof}

\begin{proof}[Proof of \textup{(v)} implies \textup{(i)} in Theorem~\ref{thm1}]
    It suffices to combine Lemma~\ref{lem1} with Lemma~\ref{lem3}.
\end{proof}

{\bf Funding:} The first-named author was supported by the U.S. National Science Foundation's MSPRF fellowship via NSF grant DMS-2502744. 

{\bf Acknowledgments:} 
An LLM was used in the initial exploration of Conjecture~\ref{conj} and first produced a sketch of a proof of Conjecture \ref{conj}. Guided by this outline, the authors developed new ideas, which led to the more transparent proofs presented here.  We refuse to advertise for a particular private company and do not mention which LLM.

This project started at The Bremen Town Mathematicians (in Convexity) conference, and we thank the organizers and the University of Bremen for their hospitality.

\bibliographystyle{acm}
\bibliography{references}

@article{AS26,
  author  = {Averkov, Gennadiy and Soprunov, Ivan},
  title   = {An algebraic-combinatorial proof of a {B{\'e}zout}-type inequality for mixed volumes of three-dimensional zonoids},
  journal = {Discrete \& Computational Geometry},
  volume  = {76},
  pages = {653--663},
  year = {2026},
  doi = {10.1007/s00454-025-00745-2}
}

@article{SZ16,
  author = {Soprunov, I. and Zvavitch, A.},
  fjournal = {International Mathematics Research Notices. IMRN},
  issn = {1073-7928},
  journal = {Int. Math. Res. Not.},
  mrclass = {52A39 (52A40)},
  mrnumber = {3632081},
  number = {23},
  pages = {7230--7252},
  title = {Bezout inequality for mixed volumes},
  year = {2016}
}

@article{SSZ19,
  author = {Saroglou, Christos and Soprunov, Ivan and Zvavitch, Artem},
  title = {Wulff shapes and a characterization of simplices via a {B}ezout type inequality},
  journal = {Adv. Math.},
  fjournal = {Advances in Mathematics},
  volume = {357},
  year = {2019},
  pages = {106789, 24},
  issn = {0001-8708},
  mrclass = {52A39 (52A20 52A40 52B11)},
  mrnumber = {4017405},
  mrreviewer = {Uwe Schnell},
  doi = {10.1016/j.aim.2019.106789}
}

@article{HLPRY25,
  author = {Haddad, Juli{\'a}n and Langharst, Dylan and Putterman, Eli and Roysdon, Michael and Ye, Deping},
  title = {Affine isoperimetric inequalities for higher-order projection and centroid bodies},
  journal = {Math. Ann.},
  fjournal = {Mathematische Annalen},
  volume = {393},
  year = {2025},
  number = {1},
  pages = {1073--1121},
  issn = {0025-5831,1432-1807},
  mrclass = {52A39 (28A75 46E35 52A40)},
  mrnumber = {4966578},
  doi = {10.1007/s00208-025-03271-x},
  url = {https://doi.org/10.1007/s00208-025-03271-x}
}

@article{RS57,
  author = {Rogers, C. A. and Shephard, G. C.},
  title = {The difference body of a convex body},
  journal = {Arch. Math. (Basel)},
  fjournal = {Archiv der Mathematik},
  volume = {8},
  year = {1957},
  pages = {220--233},
  issn = {0003-889X},
  mrclass = {52.0X},
  mrnumber = {92172},
  mrreviewer = {W. Fenchel},
  doi = {10.1007/BF01899997}
}

@book{Sh1,
  author = {Schneider, Rolf},
  title = {Convex {B}odies: the {B}runn-{M}inkowski {T}heory},
  edition = {2nd expanded},
  series = {Encyclopedia of {M}athematics and its {A}pplications},
  volume = {151},
  year = {2014},
  publisher = {Cambridge University Press},
  address = {Cambridge, UK}
}

@article{SM24,
  author = {Szusterman, Maud},
  title = {Extremizers in {S}oprunov and {Z}vavitch's {B}ezout inequalities for mixed volumes},
  journal = {J. Math. Anal. Appl.},
  fjournal = {Journal of Mathematical Analysis and Applications},
  volume = {529},
  year = {2024},
  number = {2},
  pages = {Paper No. 127461, 23},
  issn = {0022-247X,1096-0813},
  mrclass = {52A39},
  mrnumber = {4650796},
  mrreviewer = {Qiang\ Tu},
  doi = {10.1016/j.jmaa.2023.127461},
  url = {https://doi.org/10.1016/j.jmaa.2023.127461}
}

@incollection{SM26,
  author = {Szusterman, Maud},
  title = {A new excluding condition towards the {S}oprunov-{Z}vavitch conjecture on {B{\'e}zout}-type inequalities},
  booktitle = {Geometry, analysis and convexity},
  series = {RSME Springer Ser.},
  volume = {17},
  pages = {25--39},
  publisher = {Springer, Cham},
  year = {2026},
  isbn = {978-3-032-11433-4; 978-3-032-11434-1},
  mrclass = {52A40 (49Q10)},
  mrnumber = {5072487},
  doi = {10.1007/978-3-032-11434-1_2},
  url = {https://doi.org/10.1007/978-3-032-11434-1_2}
}

@article{AFO14,
  author = {Artstein-Avidan, S. and Florentin, D. and Ostrover, Y.},
  doi = {10.1142/S0219199713500314},
  fjournal = {Communications in Contemporary Mathematics},
  issn = {0219-1997},
  journal = {Commun. Contemp. Math.},
  mrclass = {52A39 (52A40)},
  mrnumber = {3195153},
  mrreviewer = {Christos Saroglou},
  number = {2},
  pages = {1350031, 14},
  title = {Remarks about mixed discriminants and volumes},
  volume = {16},
  year = {2014}
}

@article{FMMZ24,
  author = {Fradelizi, Matthieu and Madiman, Mokshay and Meyer, Mathieu and Zvavitch, Artem},
  title = {On the volume of the {M}inkowski sum of zonoids},
  journal = {J. Funct. Anal.},
  fjournal = {Journal of Functional Analysis},
  volume = {286},
  year = {2024},
  number = {3},
  pages = {Paper No. 110247, 41},
  issn = {0022-1236,1096-0783},
  mrclass = {52A20},
  mrnumber = {4669585},
  doi = {10.1016/j.jfa.2023.110247},
  url = {https://doi.org/10.1016/j.jfa.2023.110247}
}

@article{Cha67,
  author = {Chakerian, G. D.},
  title = {Inequalities for the difference body of a convex body},
  journal = {Proc. Amer. Math. Soc.},
  fjournal = {Proceedings of the American Mathematical Society},
  volume = {18},
  year = {1967},
  pages = {879--884},
  issn = {0002-9939},
  mrclass = {52.40},
  mrnumber = {218972},
  mrreviewer = {G. C. Shephard},
  doi = {10.2307/2035131}
}

@misc{FHMNWZ26,
  author = {Fradelizi, Matthieu and Hubard, Alfredo and Manui, Auttawich and Ndiaye, Cheikh Saliou and Wang, Shouda and Zvavitch, Artem},
  title = {Volume and Projection Inequalities {I}: Zonoids and {Courtade}'s Conjecture},
  year = {2026},
  note = {arXiv:2608.12681},
  eprint = {2608.12681},
  eprinttype = {arXiv},
  eprintclass = {math.MG},
  doi = {10.48550/arXiv.2608.12681}
}

@incollection{GB23,
  author = {Bianchi, Gabriele},
  title = {The covariogram problem},
  booktitle = {Harmonic Analysis and Convexity},
  publisher = {De Gruyter},
  year = {2023},
  doi = {10.1515/9783110775389-002},
  pages = {37--82},
  series = {Adv. Anal. Geom.},
  editor = {Koldobsky, Alexander and Volberg, A.}
}

@article{SSZ16,
  author = {Saroglou, C. and Soprunov, I. and Zvavitch, A.},
  fjournal = {Proceedings of the American Mathematical Society},
  issn = {0002-9939},
  journal = {Proc. Amer. Math. Soc.},
  mrclass = {52A39 (52A40 52B11)},
  mrnumber = {3556275},
  mrreviewer = {P. R. Goodey},
  number = {12},
  pages = {5333--5340},
  title = {Characterization of simplices via the {B}ezout inequality for mixed volumes},
  volume = {144},
  year = {2016}
}

\end{document}